\author[Grover]{Priyanka Grover}
\author[Mishra]{Pradip Mishra}
\email{priyanka.grover@snu.edu.in, pradip.kumar@snu.edu.in}
\title{Perturbation bounds for the Mostow and the bipolar decompositions}
\address{Department of Mathematics\\
Shiv Nadar University, Dadri\\ U.P. 201314, India.}
\subjclass[2010]{15A23, 47A55, 65F60, 47A64, 47A30, 15A45}
\keywords{Perturbation bounds, Derivative, Matrix factorizations, The bipolar decomposition, Mostow's decomposition theorem, Polar decomposition, Sylvester's equation, Geometric mean}
\newcommand{\C}{\mathbb{C}}
\newcommand{\R}{\mathbb{R}}
\newcommand{\De}{{\rm D}}
\newcommand{\diag}{{\rm diag}}
\newtheorem{thm}{Theorem}
\newtheorem{corollary}[thm]{Corollary}
\newtheorem{prop}[thm]{Proposition}
\theoremstyle{remark}
\newtheorem{remark}[thm]{Remark}
\newcommand{\h}{\mathop{{\rm Re}}}
\newcommand{\vertiii}[1]{{\left\vert\kern-0.25ex\left\vert\kern-0.25ex\left\vert #1
    \right\vert\kern-0.25ex\right\vert\kern-0.25ex\right\vert}}
\begin{document}
\maketitle
\begin{abstract}
 Perturbation bounds for Mostow's decomposition and the bipolar decomposition of matrices have been computed. To do so, expressions for the derivative of the geometric mean of two positive definite matrices have been derived.
\end{abstract}

\numberwithin{thm}{section}
\numberwithin{remark}{section}
\numberwithin{prop}{section}
\numberwithin{corollary}{section}
\numberwithin{equation}{section}

\section{Introduction}
Matrix factorizations have been used in numerical analysis to implement efficient matrix algorithms. In machine learning, matrix factorizations play an important role to explain latent features underlying the interactions between different kinds of entities. Many matrix factorizations namely, the polar decomposition, the $QR$ decomposition, the $LR$ decomposition etc., have been of considerable interest for many decades. Perturbation bounds for such factorizations have been of interest for a long time (see \cite{bhatia_Matrix_factorizations, stewart, sun_stewart} and the references therein). Some generalizations and improvements on them have been obtained in the subsequent works, for example, see \cite{chang,chang_stehle,galantai,largillier,li_sun,li_sun2,li_yang_shao,xie}.

An interesting matrix factorization follows from the work of Mostow \cite{mostow}. It states  that every non singular complex matrix $Z$ can be uniquely factorized as
\begin{equation}
Z=We^{iK} e^S, \label{mostow}
\end{equation}
where $W$ is a unitary matrix, $S$ is a real symmetric matrix and $K$ is a real skew symmetric matrix. Recently, Bhatia \cite{bhatia_bipolar} showed that every complex unitary matrix $W$  can be factorized as
\begin{equation}
W=e^L e^{iT} \label{unitary},
\end{equation}where $L$ is a real skew symmetric matrix and $T$ is a real symmetric matrix.
Using \eqref{mostow} and \eqref{unitary}, it has been obtained in \cite{bhatia_bipolar} that
\begin{equation}
Z=e^L e^{iT} e^{iK} e^S. \label{bipolar}
\end{equation}
Our goal is to find the perturbation bounds for the factors arising in \eqref{mostow}, \eqref{unitary} and \eqref{bipolar}.  In \cite{barbaresco}, Barbaresco has used Berger Fibration in Unit Siegel Disk for Radar Space-Time Adaptive Processing and Toeplitz-Block-Toeplitz covariance matrices based on  Mostow's decomposition.

Let $\mathbb M(n, \C)$ be the space of $n\times n$ complex matrices,
and $\mathbb U(n, \C)$ be the set of $n\times n$ complex unitary matrices.
Let $|||\cdot|||$ be any unitarily invariant norm on $\mathbb M(n,\C)$, that is, for any $U,V\in \mathbb U(n,\C)$ and $A\in \mathbb M(n,\C)$, we have
$$|||UAV|||=|||A|||.$$ Two special examples of such norms are the \emph{operator norm} $\|\cdot\|$ (also known as the \emph{spectral norm}) and \emph{Frobenius norm} $\|\cdot\|_2$ (also known as \emph{Hilbert-Schmidt norm} or \emph{Schatten $2$-norm}).
Various properties of unitarily invariant norms are known \cite[Chapter IV]{bhatia_matrix_analysis}. We would require the following important properties:
for $A,B,C\in \mathbb M(n,\C)$
\begin{equation}
|||ABC|||\leq \|A\|\ |||B|||\ \|C\|,\label{tineq}
\end{equation}
and
\begin{equation}
|||A|||=|||A^*|||=|||A^t|||=|||\bar A|||.
\end{equation}
Let $\mathscr W$ be a subspace of $\mathbb M(n,\C)$ and let $\mathcal T:\mathscr W\rightarrow \mathbb M(n,\C)$ be a linear map. As in \cite{bhatia_Matrix_factorizations}, we take
\begin{eqnarray}
|||\mathcal T|||&=&\sup\{|||\mathcal T(X)|||: |||X|||=1 \}.\label{opnorm}
\end{eqnarray}

 It has been shown in \cite{barbaresco, bhatia_bipolar} that the factors in the decomposition \eqref{mostow} are related to the \emph{geometric mean}. So to obtain the perturbation bounds for \eqref{mostow},  we obtain expressions for the derivative of the geometric mean and bounds on its norms in Section 2. In Section 3 and Section 4, we exploit the idea in \cite{bhatia_Matrix_factorizations} to obtain bounds on the derivative of the decomposition maps for \eqref{mostow} and \eqref{unitary}, respectively. In Section 5, we discuss  the first order perturbation bounds for maps on Lie groups and obtain the perturbation bounds for the factorizations \eqref{mostow}, \eqref{unitary} and \eqref{bipolar}.

\section{Derivative of the geometric mean}
Let $\mathbb H(n,\C)$ be the space of $n\times n$ complex Hermitian matrices and  let $\mathbb P(n,\C)$ be the set of $n\times n$ complex positive definite matrices.
For $A,B\in \mathbb P(n,\C)$ their  geometric mean is defined as
\begin{equation}
A\#B=A^{1/2} \left(A^{-1/2} B A^{-1/2}\right)^{1/2} A^{1/2},\label{defnofgm}
\end{equation}
\cite[Chapter 4]{bhatia_positive_definite}. It is the unique positive solution of the \emph{Riccati equation}
 \begin{equation}
XA^{-1} X=B.\label{equationofgm}
\end{equation}
The geometric mean of $A$ and $B$ is also given by
\begin{equation}
A\#B=A (A^{-1} B)^{1/2}=(AB^{-1})^{1/2} B,\label{gm}
\end{equation}
where $(A^{-1}B)^{1/2}$ and $(AB^{-1})^{1/2} $ are the unique square roots of $A^{-1}B$ and $AB^{-1}$, respectively, with positive eigenvalues.

Let $G: \mathbb P(n, \C) \times \mathbb P(n, \C)\rightarrow \mathbb P(n,\C)$ be the map defined as
$$G(A,B)=A\# B.$$ Since $A\mapsto A^{1/2}$ is a differentiable function on $\mathbb P(n,\C)$, 
we get from \eqref{defnofgm} that $G$ is a differentiable map. The derivative  is given by
$$\De G(A,B)(X,Y)=\left.\frac{d}{dt}\right|_{t=0} G(A+tX, B+tY) \text{ for all } X,Y\in \mathbb H(n,\C).$$
The following proposition gives an expression for $\De G(A,B)$.

\begin{prop}\label{prop2.1}
For $X,Y\in \mathbb H(n,\C)$
\begin{equation}
\De G(A,B)(X,Y)=\int_0^{\infty} e^{-t (BA^{-1})^{1/2}} (Y+(BA^{-1})^{1/2} X (A^{-1} B)^{1/2}) e^{-t(A^{-1} B)^{1/2}} dt. \label{2.4}
\end{equation}
\end{prop}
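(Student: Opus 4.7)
The plan is to differentiate the Riccati characterization \eqref{equationofgm} of $A\#B$ and to reduce the resulting equation to a standard Sylvester equation whose solution admits the integral representation \eqref{2.4}.

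First, I would set $M=G(A,B)=A\#B$ and $Z=\De G(A,B)(X,Y)$. Applying $\frac{d}{dt}\big|_{t=0}$ to the identity $G(A+tX,B+tY)(A+tX)^{-1}G(A+tX,B+tY)=B+tY$, using the product rule and $\frac{d}{dt}\big|_{t=0}(A+tX)^{-1}=-A^{-1}XA^{-1}$, I would obtain
\begin{equation*}
ZA^{-1}M + MA^{-1}Z = Y + MA^{-1}XA^{-1}M.
\end{equation*}

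The next step is to simplify the coefficients. Using the representation $M=A(A^{-1}B)^{1/2}$ from \eqref{gm}, one sees immediately that $A^{-1}M=(A^{-1}B)^{1/2}$. For $MA^{-1}=A(A^{-1}B)^{1/2}A^{-1}$, squaring gives $A(A^{-1}B)A^{-1}=BA^{-1}$, and since the matrix $A(A^{-1}B)^{1/2}A^{-1}$ is similar to $(A^{-1}B)^{1/2}$ it has positive eigenvalues; by uniqueness of the square root with positive eigenvalues it must equal $(BA^{-1})^{1/2}$. Substituting these identifications converts the differentiated equation into the Sylvester-type equation
\begin{equation*}
(BA^{-1})^{1/2}Z + Z(A^{-1}B)^{1/2} = Y + (BA^{-1})^{1/2}X(A^{-1}B)^{1/2}.
\end{equation*}

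Finally, since $(BA^{-1})^{1/2}$ and $(A^{-1}B)^{1/2}$ are similar to the positive definite matrices $A^{-1/4}(A^{1/2}BA^{1/2})^{1/2}A^{-1/4}$-type expressions and hence have spectra contained in $(0,\infty)$, the classical integral representation for solutions of $PZ+ZQ=C$ with $\sigma(P),\sigma(Q)$ in the open right half-plane yields
\begin{equation*}
Z=\int_0^\infty e^{-t(BA^{-1})^{1/2}}\bigl(Y+(BA^{-1})^{1/2}X(A^{-1}B)^{1/2}\bigr)e^{-t(A^{-1}B)^{1/2}}\,dt,
\end{equation*}
which is precisely \eqref{2.4}.

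The main obstacle I anticipate is the algebraic identification $MA^{-1}=(BA^{-1})^{1/2}$; everything else is a routine application of the differentiation-of-an-implicit-equation idea together with the standard Sylvester-equation integral. A minor subtlety is that $BA^{-1}$ is not Hermitian, so one must argue via similarity to a positive definite matrix that its unique square root with positive spectrum is well-defined and that the integral converges.
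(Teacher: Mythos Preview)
Your proposal is correct and follows essentially the same route as the paper: differentiate the Riccati identity \eqref{equationofgm}, identify the coefficients via $A^{-1}M=(A^{-1}B)^{1/2}$ and $MA^{-1}=(BA^{-1})^{1/2}$, and solve the resulting Sylvester equation by the standard integral formula. The only cosmetic difference is that the paper writes $C=A^{-1}G(A,B)$ and observes $G(A,B)A^{-1}=C^*$ (since $A$ and $G(A,B)$ are Hermitian), so the identification $MA^{-1}=(BA^{-1})^{1/2}$ comes out as $C^*=((A^{-1}B)^{1/2})^*$ rather than via your squaring-and-similarity argument; both justifications are equivalent.
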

\begin{proof}
 For sufficiently small $t$, by \eqref{equationofgm}, we have
\begin{equation}
G(A+tX,B+tY) (A+tX)^{-1} G(A+tX,B+tY)=B+tY.\label{2.1}
\end{equation}
Differentiating with respect to $t$  at $0$, we get
\begin{eqnarray*}
&&\hspace{-1.3cm}\left(\De G(A,B) (X,Y)\right) A^{-1} G(A,B)-G(A,B)(A^{-1}XA^{-1})G(A,B)\\
&&+G(A,B) A^{-1} \left(\De G(A,B)(X,Y)\right)=Y.
\end{eqnarray*}
Put $D=\De G(A,B)(X,Y)$ and $C=A^{-1} G(A,B)=(A^{-1} B)^{1/2}$. Then the above equation can be rewritten as
\begin{equation}
C^* D+D C=Y+C^* X C.\label{sylvester_gm}
\end{equation}
This is a well studied Sylvester's equation (see \cite{bhatia_matrix_analysis, bhatia_rosenthal}). By \cite[Theorem VII.2.3]{bhatia_matrix_analysis}, we obtain
\begin{equation}
\De G(A,B)(X,Y)=\int_0^{\infty} e^{-t C^*} (Y+C^*XC)   e^{-t C} dt. \label{eq2.7}
\end{equation}
Substituting $C=(A^{-1}B)^{1/2}$ in \eqref{eq2.7}, we obtain \eqref{2.4}.
\end{proof}

Some other expressions for the solution of the Sylvester's equation \cite{ bhatia_matrix_analysis, bhatiauchiyama} are known. From these, one can obtain other expressions for $\De G(A,B)(X,Y)$.

Suppose $A$ and $B$ commute. Then $C=(A^{-1}B)^{1/2}$ is Hermitian.  Let $\lambda_1(C)\geq \cdots\geq \lambda_n(C)$ denote the eigenvalues of $C$. Using \cite[Theorem VII.2.15]{bhatia_matrix_analysis} for \eqref{sylvester_gm}, we obtain

$$|||\De G(A,B)(X,Y)|||\leq \frac{\pi}{4 \lambda_n(C)} |||Y+C^*XC|||.$$
By \eqref{opnorm}, $|||\De G(A,B)|||=\sup\{|||\mathcal \De G(A,B)(X,Y)|||: |||(X,Y)|||=1\}$, where $|||(X,Y)|||= \max \{|||X|||,|||Y|||\}$. So we get
\begin{equation}
|||\De G(A,B)|||\leq \frac{\pi}{4 \lambda_n(C)} \left(1+\|C\|^2\right).\label{eq2.8}
\end{equation}

By Proposition \ref{prop2.1}, we obtain a better bound for $|||\De G(A,B)|||$. We mention this in the following corollary for general $A$ and $B$.
\begin{corollary}
For $A,B\in \mathbb P(n,\mathbb C)$
\begin{equation}
|||\De G(A,B)|||\leq
\left(\int_0^{\infty} \|e^{-t (A^{-1} B)^{1/2}}\|^2 dt\right)
\left(1+\|(A^{-1} B)^{1/2}\|^2\right).\label{bound_gm}
\end{equation}
\end{corollary}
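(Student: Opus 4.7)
The plan is to start directly from the integral formula \eqref{2.4} obtained in Proposition \ref{prop2.1} and push the unitarily invariant norm inside the integral. Writing $C=(A^{-1}B)^{1/2}$ so that $C^{*}=(BA^{-1})^{1/2}$, the expression becomes
\[
\De G(A,B)(X,Y)=\int_{0}^{\infty} e^{-tC^{*}}\bigl(Y+C^{*}XC\bigr)e^{-tC}\,dt.
\]

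The first step is the triangle inequality for the Bochner-type integral, giving $|||\De G(A,B)(X,Y)|||\le \int_0^\infty |||e^{-tC^{*}}(Y+C^{*}XC)e^{-tC}|||\,dt$. Next I apply the submultiplicative bound \eqref{tineq} to each integrand to pull out the exponentials in operator norm, obtaining the factor $\|e^{-tC^{*}}\|\,\|e^{-tC}\|$. I then use the fact that the operator norm is invariant under taking adjoints, so $\|e^{-tC^{*}}\|=\|(e^{-tC})^{*}\|=\|e^{-tC}\|$, collapsing this to $\|e^{-tC}\|^{2}$.

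To handle the middle factor, I apply the triangle inequality and \eqref{tineq} once more:
\[
|||Y+C^{*}XC|||\le |||Y|||+\|C^{*}\|\,|||X|||\,\|C\|=|||Y|||+\|C\|^{2}\,|||X|||.
\]
Taking the supremum over pairs $(X,Y)$ with $|||(X,Y)|||=\max\{|||X|||,|||Y|||\}=1$ yields $|||Y+C^{*}XC|||\le 1+\|C\|^{2}$. Combining with the factor $\|e^{-tC}\|^{2}$ and re-substituting $C=(A^{-1}B)^{1/2}$ gives exactly \eqref{bound_gm}.

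There is no real obstacle here; the only point requiring a little care is the justification that the unitarily invariant norm can be interchanged with the improper integral, which follows from continuity of $|||\cdot|||$ and the fact that the exponentials $e^{-tC}$ decay (since the eigenvalues of $C$ are positive), ensuring the integrand is integrable in norm. Everything else is a direct application of the two standard properties \eqref{tineq} and invariance of $\|\cdot\|$ under adjoint.
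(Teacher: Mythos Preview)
Your proof is correct and is exactly the argument the paper has in mind: the corollary is stated there as an immediate consequence of Proposition~\ref{prop2.1}, and the details you supply (pushing $|||\cdot|||$ under the integral, applying \eqref{tineq} twice, using $\|e^{-tC^{*}}\|=\|e^{-tC}\|$, and bounding $|||Y+C^{*}XC|||$ by $1+\|C\|^{2}$ on the unit ball of $\max\{|||X|||,|||Y|||\}$) are precisely the intended ones.
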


In the case when $A$ and $B$ commute, $\int_0^{\infty} \|e^{-t C}\|^2 dt=\frac{1}{2\lambda_n(C)}.$
So from \eqref{bound_gm}, we obtain
\begin{equation}
|||\De G(A,B)|||\leq \frac{1}{2 \lambda_n(C)} \left(1+\|C\|^2\right).
\end{equation}
In some other cases, a bound on $\int_0^{\infty} \|e^{-t C}\|^2 dt$   is easy to calculate. For example, if  $\lambda_n\left(\h C\right)$ is nonnegative (where $\h C=\frac{C+C^*}{2}$), then we have

$$\int_0^{\infty} \|e^{-t C}\|^2 dt\leq \frac{1}{2 \lambda_n\left(\h C\right)}.$$
This has been observed in \cite{bhatia_Matrix_factorizations, bhatia_elsner}.

\begin{remark}
We observe that for $A,B\in \mathbb P(n,\mathbb C)$, $\De G(A,B)$ is a positive linear map from $\mathbb H(n, \C)\times \mathbb H(n, \C)$ to $\mathbb M(n,\C)$. So by \cite[Theorem 2.6.3]{bhatia_positive_definite}, we obtain $$\|\De G(A,B)\|= \|\De G(A,B)(I,I)\|. $$
\end{remark}

\section{Mostow's decomposition}
The Mostow decomposition theorem \eqref{mostow} gives that any non singular matrix $Z$ can be uniquely factorized as $Z=W e^{iK} e^S$. Let $P_1=e^{iK}$ and $P_2=e^S$.  Then $P_1\in \mathbb P(n, \C)$ and $P_2 \in \mathbb P(n,\R)$, where $\mathbb P(n,\R)$ stands for the set of $n\times n$ real positive definite matrices. We also have $\overline {P_1} P_1=I$. Such matrices $X$  which satisfy $\overline X X=I$ are  called \emph{circular} (or \emph{coninvolutary}) \cite{horn_tma}. Let $\mathbb P_{circ}$ be the set of circular positive definite matrices. Then $P_1\in \mathbb P_{circ}$.
Let $\varrho: \mathbb{GL}(n,\C)\rightarrow \mathbb{U}(n,\C)\times \mathbb P_{circ}\times \mathbb P(n,\R)$ be the map
\begin{equation}
\varrho( Z)= (\varrho_0(Z), \varrho_1(Z), \varrho_2(Z)),
\end{equation}
where $\varrho_0(Z)=W$, $\varrho_1(Z)=P_1$, and $\varrho_2(Z)=P_2$ .
Since the factorizations in \eqref{mostow} are unique, these maps are well defined. The product map $\tau(W, P_1, P_2)=W P_1 P_2$ is
 the inverse of $\varrho$. For any matrix $A$, let $\text{\rm cond} (A)$ denotes the \emph{condition number} of $A$.

\begin{thm}\label{3.1}
For $Z\in \mathbb{GL}(n,\C)$ let $\beta(Z)=\int_0^\infty \|e^{-t\left( (Z^* Z)^{-1} \overline{Z^* Z}\right)^{1/2}}\|^2 dt$. Then
\begin{equation}
|||{\De\varrho_0(Z)}|||\leq  \frac{\|P_1^{-1}\| \ \|P_2^{-1}\|}{2 } \left( 1+ \|P_1\|  \beta(Z)
\text{\rm cond} (Z)  \left(1+\text{\rm cond}(Z)^4\right)\right),\label{bound for W}
\end{equation}
\begin{equation}
|||{\De\varrho_1(Z)}|||\leq \frac{\text{\rm cond} (P_1) \ \|P_2^{-1}\| }{2 } \left(1+\|P_1\| \beta(Z)
\text{\rm cond} (Z)  \left(1+\text{\rm cond}(Z)^4\right)\right),\label{bound for P_1}
\end{equation}
and
\begin{eqnarray}
|||{\De \varrho_2(Z)}|||&\leq&  \beta(Z)
\text{\rm cond} (Z) \left(1+\text{\rm cond}(Z)^4\right). \label{bound for P_2}
\end{eqnarray}
\end{thm}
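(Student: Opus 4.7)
The plan is to reduce each of the three derivatives to an application of Proposition~\ref{prop2.1} via the backbone identity
\begin{equation*}
P_2^{2} \;=\; (Z^{*}Z)\,\#\,\overline{Z^{*}Z}.
\end{equation*}
Since $\overline{P_1}=P_1^{-1}$ and $P_2$ is real, $Z^{*}Z=P_2P_1^{2}P_2$ and $\overline{Z^{*}Z}=P_2P_1^{-2}P_2$; the congruence rule $(SXS)\#(SYS)=S(X\#Y)S$ together with $P_1^{2}\#P_1^{-2}=I$ then yields the identity. With this in hand, $P_1$ and $W$ are determined by the explicit formulas $P_1^{2}=P_2^{-1}(Z^{*}Z)P_2^{-1}$ and $W=ZP_2^{-1}P_1^{-1}$.

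For \eqref{bound for P_2}, differentiating the backbone identity and applying Proposition~\ref{prop2.1} yields the Sylvester equation
\begin{equation*}
\dot P_2\,P_2+P_2\,\dot P_2 \;=\; \De G(Z^{*}Z,\overline{Z^{*}Z})\bigl(\dot Z^{*}Z+Z^{*}\dot Z,\,\overline{\dot Z^{*}Z+Z^{*}\dot Z}\bigr),
\end{equation*}
which I would solve via $\dot P_2=\int_{0}^{\infty}e^{-tP_2}(\,\cdot\,)\,e^{-tP_2}\,dt$, contributing the factor $\tfrac{1}{2}\|P_2^{-1}\|$. The right-hand side is controlled by \eqref{bound_gm}, the elementary estimate $\|((Z^{*}Z)^{-1}\overline{Z^{*}Z})^{1/2}\|\le\text{cond}(Z)^{2}$ (read off from $(A^{-1}B)^{1/2}=A^{-1/2}(A^{-1/2}BA^{-1/2})^{1/2}A^{1/2}$), and $|||\dot Z^{*}Z+Z^{*}\dot Z|||\le 2\|Z\|\,|||\dot Z|||$. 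This produces a prefactor $\|P_2^{-1}\|\|Z\|$. The clean-up step is the geometric-mean lower bound
\begin{equation*}
\lambda_{\min}(A\#B)\;\ge\;\sqrt{\lambda_{\min}(A)\lambda_{\min}(B)},
\end{equation*}
checked by a Schur-complement argument on $\bigl(\begin{smallmatrix}A&cI\\cI&B\end{smallmatrix}\bigr)$ with $c=\sqrt{\lambda_{\min}(A)\lambda_{\min}(B)}$; applied to the backbone identity it gives $\|P_2^{-1}\|\le\|Z^{-1}\|$, so that $\|P_2^{-1}\|\|Z\|\le\text{cond}(Z)$, matching \eqref{bound for P_2}.

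For \eqref{bound for P_1}, I differentiate $P_1^{2}=P_2^{-1}(Z^{*}Z)P_2^{-1}$ and solve the ensuing Sylvester equation for $\dot P_1$; its right-hand side splits into one term linear in $\dot Z$ and two involving $\dot P_2$, both already bounded. Integrating against $e^{-tP_1}$ contributes $\tfrac{1}{2}\|P_1^{-1}\|$, and the factor $\text{cond}(P_1)=\|P_1\|^{2}$ appearing in the theorem arises from using $\|P_1^{-1}\|=\|P_1\|$, a consequence of $\overline{P_1}=P_1^{-1}$ and invariance of the unitarily invariant norm under conjugation. For \eqref{bound for W}, differentiating $W=ZP_2^{-1}P_1^{-1}$ writes $\dot W$ as an affine combination of $\dot Z,\dot P_1,\dot P_2$, so substituting the previous two estimates yields the bound.

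The main obstacle is bookkeeping: keeping intermediate estimates tight enough that every factor of $\|P_2^{-1}\|,\|P_2\|,\|Z\|,\|Z^{-1}\|$ collapses into $\text{cond}(Z)$ via $\|P_2^{-1}\|\le\|Z^{-1}\|$, and every factor of $\|P_1\|,\|P_1^{-1}\|$ collapses into $\text{cond}(P_1)$ via $\|P_1^{-1}\|=\|P_1\|$. Beyond Proposition~\ref{prop2.1}, the corollary containing \eqref{bound_gm}, and the above lower bound on $\lambda_{\min}(A\#B)$, no further matrix analysis is needed.
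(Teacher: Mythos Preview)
Your treatment of $\varrho_2$ is essentially the paper's: the same composition $f\circ G\circ g\circ h$, the same square-root derivative bound, and the same geometric-mean inequality (your Schur-complement estimate on $\lambda_{\min}(A\#B)$ is just the dual of $\|A\#B\|\le\|A\|^{1/2}\|B\|^{1/2}$ applied to inverses). So \eqref{bound for P_2} goes through.

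The gap is in $\varrho_1$ and $\varrho_0$. Differentiating $P_1^{2}=P_2^{-1}(Z^{*}Z)P_2^{-1}$ and solving for $\dot P_1$ is exactly the alternative the paper records in Remark~3.3: after cancelling the $\tfrac12$ from $\int_0^\infty\|e^{-tP_1}\|^2\,dt$ against the two terms on the right-hand side you are left with the prefactor $\|P_1^{-1}\|\cdot\|P_1\|=\text{cond}(P_1)$, \emph{not} $\tfrac12\text{cond}(P_1)$, so you miss \eqref{bound for P_1} by a factor of~$2$. For $\varrho_0$, feeding the estimates for $\dot P_1$ and $\dot P_2$ into $\dot W=\dot Z P_2^{-1}P_1^{-1}-WP_1\dot P_2P_2^{-1}P_1^{-1}-W\dot P_1P_1^{-1}$ compounds the loss: one gets $\|P_1^{-1}\|\|P_2^{-1}\|(1+\|P_1\|^2)\bigl(1+\|P_1\|\beta(Z)\,\text{cond}(Z)(1+\text{cond}(Z)^4)\bigr)$, off from \eqref{bound for W} by the factor $2(1+\|P_1\|^2)$. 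The paper avoids this cascading by a different device: it parametrises the tangent spaces and writes the single identity
\[
WXP_1P_2+WP_1^{1/2}(iY_1)P_1^{1/2}P_2+WP_1Y_2=A,
\]
then, by taking conjugate transpose and adding/subtracting, isolates Sylvester equations for $X$ and for $P_1^{1/2}(iY_1)P_1^{1/2}$ whose right-hand sides involve only $A$ and $Y_2=\De\varrho_2(Z)(A)$, never $\dot P_1$. That decoupling is what produces the sharper constants stated in the theorem; your substitution scheme cannot reach them.
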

\begin{proof}

We know $\varrho_2(Z)=e^S=(Z^*Z\# \overline{Z^*Z})^{1/2}$.
Let $f:(0,\infty)\rightarrow \mathbb R$ be defined as $f (t)=t^{1/2}$, $g:\mathbb P(n,\mathbb C)\rightarrow \mathbb P(n,\mathbb C)\times \mathbb P(n,\mathbb C)$ as $g(A)=(A,\overline{A})$, and $h:GL(n,\mathbb C)\rightarrow \mathbb P(n,\mathbb C)$ as $h(Z)=Z^*Z$. Then $\varrho_2=f\circ G \circ g \circ h$, where $G$ is the geometric mean map as defined in Section 2. By the chain rule, $\De \varrho_2(Z)=\De f(Z^*Z\#\overline{Z^*Z})\circ \De G(Z^*Z, \overline{Z^*Z})\circ \De g(Z^*Z)\circ \De h(Z)$.

Now by \cite[Theorem X.3.1]{bhatia_matrix_analysis}, we obtain that if $A\in \mathbb P(n,\mathbb C)$, then
\begin{equation}
|||\De f(A)|||\leq \frac{1}{2}\|A^{-1}\|^{1/2}.\label{6.5}
\end{equation}
So
 $$|||\De \varrho_2(Z)(A)|||\leq \frac{1}{2}\|(Z^*Z\#\overline{Z^*Z})^{-1}\|^{1/2}\ |||\De G(Z^*Z, \overline{Z^*Z})(Z^*A+AZ, \overline{Z^*A+AZ})|||.$$
We know that $(A\#B)^{-1}=A^{-1}\# B^{-1}$ and $\|A\#B\|\leq \|A\|^{1/2} \|B\|^{1/2}$. Therefore
$$|||\De \varrho_2(Z)(A)|||\leq \frac{1}{2}\|Z^{-1}\|\ |||\De G(Z^*Z, \overline{Z^*Z})(Z^*A+AZ, \overline{Z^*A+AZ})|||.$$
Let $C=(Z^*Z)^{-1} \left(Z^*Z\#\overline{Z^*Z}\right)=\left((Z^*Z)^{-1}\overline{Z^*Z}\right)^{1/2}$. Then $\|C\|\leq \text{\rm cond} (Z)^2$. By \eqref{bound_gm}, we obtain
\begin{equation}
|||\De \varrho_2(Z)(A)|||\leq \frac{1}{2}\|Z^{-1}\| \beta(Z)
\left(1+\|C\|^2\right)|||Z^*A+AZ|||,\label{6.7}
\end{equation}
and so
\begin{equation}
|||\De \varrho_2(Z)(A)|||\leq  \beta(Z)\
\text{\rm cond} (Z)  \left(1+\text{\rm cond}(Z)^4\right)\ |||A|||.\label{4.7}
\end{equation}
Equation \eqref{bound for P_2} follows from \eqref{4.7}.

Let $\mathbb {SH}(n,\R)$ be the space of $n\times n$ real skew symmetric matrices. The tangent space at any point $P_1$ is given by $i P_1^{1/2} \mathbb {SH}(n,\R)P_1^{1/2}$. This follows from \cite[p. 258]{bhatia_Matrix_factorizations}.

Let
$ \De\varrho(Z): \mathbb{M}(n,\C)\to W\mathbb {SH}(n,\C)\oplus i P_1^{1/2} \mathbb {SH}(n,\R)P_1^{1/2}\oplus \mathbb H(n,\R)$ be defined as
$\De \varrho(Z)(A)=(W X, i P_1^{1/2} Y_1 P_1^{1/2}, Y_2)$, where $X\in \mathbb {SH}(n,\C)$, $Y_1\in \mathbb {SH}(n,\R)$ and $Y_2\in  \mathbb H(n,\R)$. So we have \begin{equation}X^*=-X,\ \overline{Y_1}=Y_1,\ Y_1^t=-Y_1,\ \overline{Y_2}=Y_2,\ Y_2^t=Y_2.\label{6.7}\end{equation}  The map $\De \varrho(Z)$ is the inverse of $\De \tau(W, P_1, P_2)$, and so
\begin{equation}
\De \varrho_0(Z)(A)=W X,\ \De \varrho_1(Z)(A)=i P_1^{1/2} Y_1 P_1^{1/2},\ \De \varrho_2(Z)(A)=Y_2,\label{eq3.9}
\end{equation}
and \begin{equation}
 \De \tau(W, P_1, P_2)(W X, i P^{1/2} Y_1 P^{1/2}, Y_2)=A.\label{6.8}
\end{equation}

Also,
\begin{eqnarray}
\De \tau(W, P_1,P_2)(W X, i P^{1/2} Y_1 P^{1/2}, Y_2)&=&\left.\frac{d}{dt}\right|_{t=0} \tau(We^{tX}, P_1^{1/2} e^{itY_1} P_1^{1/2}, P_2+t Y_2)\nonumber\\
&=&\left.\frac{d}{dt}\right|_{t=0} We^{tX} P_1^{1/2} e^{itY_1} P_1^{1/2} (P_2+t Y_2)\nonumber\\
&=& WXP_1P_2+W P_1^{1/2} (i Y_1)P_1^{1/2} P_2+WP_1 Y_2. \hspace{1.2cm}\label{eq3.10}
\end{eqnarray}
By \eqref{6.8} and \eqref{eq3.10}, we obtain
\begin{equation}
WXP_1P_2+W P_1^{1/2} (i Y_1)P_1^{1/2} P_2+WP_1 Y_2=A, \label{new}
\end{equation}
that is,
\begin{equation}
X+P_1^{1/2} (i Y_1) P_1^{-1/2}=(W^*A-P_1 Y_2)(P_1 P_2)^{-1}.\label{6.10}
\end{equation}
Taking conjugate transpose on both the sides and using \eqref{6.7}, we get
\begin{equation}
-X+P_1^{-1/2} (i Y_1) P_1^{1/2}=(P_1 P_2)^{-1}(A^*W-Y_2 P_1).\label{6.11}
\end{equation}
Adding \eqref{6.10} and \eqref{6.11} gives
$$(P_1^{1/2} (i Y_1)P_1^{1/2}) P_1^{-1}+ P_1^{-1} (P_1^{1/2} (i Y_1)P_1^{1/2})=\h \left((W^*A-P_1 Y_2)(P_1 P_2)^{-1}\right).$$
By \cite[Theorem VII.2.3]{bhatia_matrix_analysis}, we obtain
\begin{eqnarray}
P_1^{1/2} (i Y_1)P_1^{1/2}
&=& \int_0^{\infty} e^{-tP_1^{-1}} \h \left((W^*A-P_1 Y_2)(P_1 P_2)^{-1}\right)  e^{-tP_1^{-1}} dt.\label{6.13}
\end{eqnarray}
So
\begin{eqnarray*}
|||\De \varrho_1(Z)(A)|||&=& |||P_1^{1/2} (i Y_1)P_1^{1/2}|||\\
&\leq & \left(\int_0^{\infty} \|e^{-tP_1^{-1}}\|^2 dt\right)|||\h \left((W^*A-P_1 Y_2)(P_1 P_2)^{-1}\right)|||\\
&\leq& \frac{\|P_1\|}{2} |||(W^*A-P_1 Y_2)(P_1 P_2)^{-1}|||\\
&\leq& \frac{\text{\rm cond} (P_1) \ \|P_2^{-1}\| }{2 } \left(1+\|P_1\| \beta(Z)\
\text{\rm cond} (Z)  \left(1+\text{\rm cond}(Z)^4\right)\right)\ |||A|||.
\end{eqnarray*}
The last inequality follows from \eqref{4.7} and \eqref{eq3.9}. Hence we obtain \eqref{bound for P_1}.

By \eqref{new}, we also have
\begin{equation}
XP_1+P_1^{1/2} (i Y_1) P_1^{1/2}=(W^*A-P_1 Y_2) P_2^{-1}.\label{6.15}
\end{equation}
Again taking conjugate transpose on both the sides and using \eqref{6.7}, we obtain
\begin{equation}
P_1 X+P_1^{1/2} (i Y_1) P_1^{1/2}=P_2^{-1} (A^*W-Y_2 P_1).\label{6.16}
\end{equation}
Now, subtracting  \eqref{6.16} from \eqref{6.15}, we get
$$X P_1+P_1 X=2i \text{ Im}\left((W^*A-P_1 Y_2) P_2^{-1}\right) .$$
Again by \cite[Theorem VII.2.3]{bhatia_matrix_analysis}, we get \begin{eqnarray*}
X&=& \int_0^{\infty} e^{-t P_1} \text{ Im}\left((W^*A-P_1 Y_2) P_2^{-1}\right) e^{-t P_1} dt.
\end{eqnarray*}
Therefore \begin{eqnarray*}
|||\De\varrho_0(Z)(A)|||&=&||| X|||\\
&\leq&  \left(\int_0^{\infty}  \|e^{-t P_1}\|^2 dt\right) |||(W^*A-P_1 Y_2) P_2^{-1}|||\\
&\leq & \frac{\|P_1^{-1}\| \ \|P_2^{-1}\|}{2 } \left( 1+ \|P_1\|  \beta(Z)\
\text{\rm cond} (Z)  \left(1+\text{\rm cond}(Z)^4\right)\right) |||A|||.
\end{eqnarray*}
From this, \eqref{bound for W} follows.

\end{proof}
\begin{remark}
We have used in \eqref{4.7} that $\|A\# B\|\leq \|A\|^{1/2} \|B\|^{1/2}$.
Better bounds on $|||\De \varrho_2(Z)(X)|||$ can be found using \cite[Theorem 2]{bhatia_grover}. For example, we also have
\begin{eqnarray*}
|||\De \varrho_2(Z)|||&\leq& \|Z\|\ \beta (Z) \|(\overline{Z^*Z})^{-1/4}(Z^*Z)^{-1/2}(\overline{Z^*Z})^{-1/4}\|\\
&&\left(1+\|Z^{-1}\|^4 \|(\overline{Z^*Z})^{1/4}(Z^*Z)^{1/2}(\overline{Z^*Z})^{1/4}\|^2\right).
\end{eqnarray*}
\end{remark}
\begin{remark}
One can find another bound for  $|||\De \varrho_1(Z)|||$ in Theorem \ref{3.1} by using the expression $e^{iK}=e^{-S} Z^*Z e^{-S}$ given in \cite{bhatia_bipolar}. This can be expressed as $\varrho_1(Z)=\left(\varrho_2(Z)^{-1} (Z^*Z) \varrho_2(Z)^{-1}\right)^{1/2}.$ Using this approach, the factor $\frac{\text{\rm cond}(P_1)}{2}$ in \eqref{bound for P_1}  gets replaced by $\|P_1\|^2$. By the chain rule, we get
\begin{eqnarray*}
\De \varrho_1(Z)(A)&=& \De f(P_2^{-1} Z^*Z P_2^{-1})(2\h ( P_2^{-1} Z^*)(A P_2^{-1} -Z P_2^{-1} (\De (\varrho_2(Z)(A)) P_2^{-1})),
\end{eqnarray*}
where $f$ is the square root function.
By \cite[Theorem X.3.1]{bhatia_matrix_analysis} and using $ZP_2^{-1}=W P_1$, 
we obtain
\begin{eqnarray}
|||\De \varrho_1(Z)|||&\leq & \|P_1\|^2 \|P_2^{-1}\| \left(1+\|P_1\| \beta(Z)\ \text{\rm cond} (Z)\ (1+(\text{\rm cond} (Z))^4) \right).\hspace{0.2cm}\label{4.16}
\end{eqnarray}
\end{remark}

\section{Decomposition of unitary matrices}\label{section 4}
Every complex unitary matrix $W$ can be factorized as $W=W_1 W_2$, by the second or third polar decomposition of $W$.
This decomposition is unique if $W' W$ doesn't have $-1$ as an eigenvalue. Let $\mathbb U=\{W\in \mathbb U(n,\C) |  -1 \notin \sigma(W' W)\}$, where $\sigma(A)$ denotes the spectrum of $A$ and  $\mathbb U_{\text{sym}^+}$ be the set of $U\in \mathbb U(n,\C)$ such that $U'=U$ and $U$ has all the eigenvalues in the open right half plane. Let $\mathbb O(n,\R)$ be the set of real orthogonal matrices.
We define $\Phi: \mathbb U\rightarrow \mathbb O(n,\R) \times \mathbb U_{\text{sym}^+}$ as $\Phi(W)=(\Phi_1(W),\Phi_2(W))$, where $\Phi_1(W)=W_1$ and $\Phi_2(W)=W_2$. The product map $\Psi: \mathbb O(n,\R) \times \mathbb U_{\text{sym}^+}\rightarrow \mathbb U$ is the inverse of $\Phi$. 

\begin{thm}\label{thm:perturbation_bound_for unitary}
Let $\sigma(W_2)=\{e^{i\theta_1},\ldots,e^{i\theta_n}\}$. Let $\{a_n\}$ be any $\ell_1$-sequence such that for all $\theta= \theta_i-\theta_j \ (1\leq i,j\leq n)$ \begin{equation}\sum_{n=-\infty}^\infty (-1)^n a_n e^{in\theta}= \frac{1}{1+e^{i\theta}}.\label{an}\end{equation}
Then for $k=1,2$
\begin{equation}
|||{\De \Phi_k (W)}|||\leq 2 \left(\sum_{n=-\infty}^{\infty} |a_n|\right).
\end{equation}
\end{thm}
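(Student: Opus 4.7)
The plan is to mimic the template of Theorem~\ref{3.1}: differentiate the inverse $\Psi(W_1,W_2)=W_1W_2$ of $\Phi$, convert the resulting linear system into Sylvester equations on the group level, and solve those equations explicitly using the Fourier identity \eqref{an}.

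First I would parametrize the tangent spaces. A curve in $\mathbb O(n,\R)$ through $W_1$ has the form $W_1e^{tX}$ with $X$ real skew-symmetric, and a curve in $\mathbb U_{\text{sym}^+}$ through $W_2$ has the form $Ve^{itY}V$ with $Y$ real symmetric and $V=W_2^{1/2}$ (itself symmetric unitary, since the eigenvalues of $W_2$ lie in the open right half plane). Setting $Z=iVYV$ and differentiating any curve $W(t)=W_1(t)W_2(t)$ at $t=0$, I get $XW_2+Z=W_1^*A=:B$, where $X^t=-X$ and $Z^t=Z$ (the latter because $V^t=V$ and $Y^t=Y$). Taking the transpose of this identity gives the companion equation $-W_2X+Z=B^t$. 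Subtracting yields the Sylvester equation $XW_2+W_2X=B-B^t$, and then substituting $X=(B-Z)W_2^{-1}$ back into this identity eliminates $X$ in favour of $Z$ to produce the second Sylvester equation $ZW_2+W_2Z=W_2B+B^tW_2$. Both Sylvester operators are invertible because $W_2\in\mathbb U_{\text{sym}^+}$ has eigenvalues $e^{i\theta_j}$ in the open right half plane, so the numbers $e^{i\theta_i}+e^{i\theta_j}=e^{i\theta_j}(1+e^{i(\theta_i-\theta_j)})$ never vanish.

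Next I diagonalize $W_2=Q\Lambda Q^*$ with $\Lambda=\diag(e^{i\theta_1},\ldots,e^{i\theta_n})$. Componentwise, each Sylvester equation takes the form $(\mu_i+\mu_j)\tilde M_{ij}=\tilde N_{ij}$; factoring $\mu_i+\mu_j=\mu_j(1+e^{i(\theta_i-\theta_j)})$, substituting the Fourier identity \eqref{an}, and reassembling the entries back into matrices yields the closed-form solutions
\begin{equation*}
X=\sum_{n=-\infty}^{\infty}(-1)^na_nW_2^n(B-B^t)W_2^{-n-1},\quad Z=\sum_{n=-\infty}^{\infty}(-1)^na_nW_2^n(W_2B+B^tW_2)W_2^{-n-1}.
\end{equation*}
Because $W_2$ is unitary and $|||\cdot|||$ is unitarily invariant, each summand has norm equal to that of $B-B^t$ (respectively $W_2B+B^tW_2$), and each of these is bounded by $2|||B|||=2|||A|||$. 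The triangle inequality together with $\{a_n\}\in\ell^1$ then yields $|||X|||,\,|||Z|||\le 2\bigl(\sum_n|a_n|\bigr)|||A|||$, and translating back via $\De\Phi_1(W)(A)=W_1X$ and $\De\Phi_2(W)(A)=Z$ (using unitary invariance once more on the factor $W_1$) delivers both of the stated bounds.

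The main obstacle I anticipate is producing a Sylvester equation for $Z$ that is decoupled from $X$: naively combining $XW_2+Z=B$ with its transpose eliminates $Z$ only when subtracting (giving the $X$-equation), while adding keeps $X$ and $Z$ coupled. The two-step substitution above is what breaks this coupling and ultimately yields the sharp constant $2\sum|a_n|$ rather than the larger $1+2\sum|a_n|$ that a direct triangle-inequality bound on $Z=B-XW_2$ would give.
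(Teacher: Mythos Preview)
Your proof is correct and follows the same overall template as the paper: differentiate the product map $\Psi$, reduce to two Sylvester equations of the form $MW_2+W_2M=N$, and solve them via the Fourier identity \eqref{an} to extract the constant $2\sum|a_n|$.

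The only noteworthy difference is in how you decouple. You take the transpose of $XW_2+Z=B$ and \emph{subtract} to isolate $X$, then substitute $X=(B-Z)W_2^{-1}$ back to obtain the equation for $Z$. The paper instead works with the equivalent relation $W_2SW_2^{-1}=X+W_2^{1/2}YW_2^{-1/2}$ (your $B=W_2S$, $Z=W_2^{1/2}YW_2^{1/2}$), takes the \emph{transpose and adds} to isolate $Y$ directly, and separately takes the \emph{complex conjugate} (using $\overline{W_2}=W_2^{-1}$, $\overline X=X$, $\overline Y=-Y$) and adds to isolate $X$. The paper's route thus exploits two independent involutions (transpose and conjugation) rather than one involution plus an elimination step; this avoids the ``substitute back'' manoeuvre you flagged as the main obstacle, and makes the decoupling for $Y$ entirely parallel to that for $X$. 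Either way the resulting Sylvester equations coincide (yours for $Z$ is just the paper's for $Y$ conjugated by $W_2^{1/2}$), and the norm estimates and final constants agree.
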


\begin{proof}
The map
$$\De \Phi(W): W\ \mathbb {SH}(n,\C)\to W_1\ \mathbb {SH}(n,\R)\oplus W_2^{\frac{1}{2}}\ i \mathbb {H}(n,\R)\ W_2^{\frac{1}{2}}$$ is an isomorphism and its inverse is $\De \Psi(W_1,W_2)$. For $X\in \mathbb {SH}(n,\R)$ and $Y\in i \mathbb {H}(n,\R)$

\begin{eqnarray*}
\De\Psi(W_1, W_2)(W_1 X, W_2^{\frac{1}{2}}Y W_2^{\frac{1}{2}})&= &\left.\frac{d}{dt}\right|_{t=0}\Psi\left(W_1e^{tX}, W_2^{\frac{1}{2}}e^{tY}W_2^{\frac{1}{2}}\right)\\
&=& W_1(XW_2+ W_2^{\frac{1}{2}}YW_2^{\frac{1}{2}}).
\end{eqnarray*}

Let $S\in \mathbb {SH}(n,\C)$ be such that  $\De \Phi(W)(WS)= (W_1 X, W_2^{\frac{1}{2}}Y W_2^{\frac{1}{2}})$. Then we have
\begin{equation*}
WS= W_1(XW_2+ W_2^{\frac{1}{2}}YW_2^{\frac{1}{2}}),\label{mainequation_in_unitary_decomposition}
\end{equation*}
that is, \begin{equation}
W_2 S W_2^{-1} = X+W_2^{1/2} Y W_2^{-1/2}.\label{*}
\end{equation}
Taking transpose on both the sides of the above equation \eqref{*} and adding  the new equation to \eqref{*}, we get
\begin{equation}
W_2Y+YW_2= W_2^{\frac{3}{2}}SW_2^{-\frac{1}{2}}+W_2^{-\frac{1}{2}}S'W_2^{\frac{3}{2}}.\label{Y_solution}
\end{equation}
By \cite[Theorem VII.2.7]{bhatia_matrix_analysis},  we obtain
\begin{eqnarray*}
Y&=&\sum_{n=-\infty}^\infty a_n(-1)^n\left(W_2^{-n+\frac{1}{2}}SW_2^{n-\frac{1}{2}}+W_2^{-n-\frac{3}{2}}S'W_2^{n+\frac{3}{2}}\right).\label{soln1}
\end{eqnarray*}
This gives $$|||{Y}|||\leq 2\left(\sum_{n=-\infty}^{\infty} |a_n|\right) |||{S}|||.$$ Therefore
\begin{eqnarray}|||{\De\Phi_2(W)(WS)}|||
&=&|||{Y}|||\nonumber\\
&\leq& 2\left(\sum_{n=-\infty}^{\infty} |a_n|\right) |||{WS}|||.
\label{dphi_2_unitary_decompstn}\end{eqnarray}

Equation \eqref{*} can also be written as
\begin{equation}
W_2^{1/2} S W_2^{-1/2}=W_2^{-1/2} X W_2^{1/2}+Y. \label{**}
\end{equation}
Taking complex conjugate on both sides of the above equation \eqref{**} and adding the new equation to \eqref{**}, we get
\begin{equation}
XW_2+W_2X= W_2S-S'W_2.
\end{equation}
By similar calculations as done above, we get $$|||{W_1 X}|||\leq 2 \left(\sum_{n=-\infty}^{\infty} {|a_n|}\right)  |||{S}|||,$$
and so
\begin{equation}|||{\De \Phi_1(W)}(WS)|||=|||{X}|||\leq 2\left(\sum_{n=-\infty}^{\infty} {|a_n|}\right) |||{WS}|||.\label{dphi_1_unitary_decompstn}\end{equation}
Equations \eqref{dphi_2_unitary_decompstn} and \eqref{dphi_1_unitary_decompstn} give the required result.
\end{proof}

\section{Perturbation bounds}

In this section, we discuss first order perturbation bounds for a map from  a Lie group to a manifold and use it to obtain perturbation bounds for the decomposition maps.  Let $\mathcal M\subseteq GL(n,\mathbb C)$ be a differentiable manifold. For $A\in \mathcal M$ let $\tilde A$ denote a perturbation of $A$ in a small neighbourhood of $A$ in $\mathcal M$. Suppose  $A=A_1 A_2$. Then  $\tilde{A_i}$ denote the corresponding factors for $\tilde A$. Let $f$ be a smooth function on $\mathcal M$. If $\mathcal M$ is a convex set, then by Taylor's theorem, we have
\begin{equation}\label{taylorthmforlinearspace}||f(\tilde A)-f(A)|||\leq |||\De f(A)|||\ |||\tilde A-A|||+O(|||\tilde A-A|||^2).\end{equation} We denote this as $$|||f(\tilde A)-f(A)|||\lesssim |||\De f(A)|||\ |||\tilde A-A|||.$$
We also note here that if there is a $M>0$ such that $|||Df(A)|||<M$, then in a small neighborhood of  $A$,  we have
    \begin{equation}|||f(\tilde{A})- f(A)|||<M|||\tilde{A}- A|||\label{realbound}.\end{equation}
\subsection{First order perturbation bounds}
The function $\log$ is well defined for all non singular matrices $A$ if we choose a branch of  logarithm.  In this case,  $\exp$ is its inverse.
 The map $\De \log (A):\mathbb M(n,\mathbb C)\to \mathbb M(n,\mathbb C)$ is given by
\begin{equation}\label{derivativeoflog}\De \log (A)(X)= \int_0^1\left(t(A-I)+I\right)^{-1}X\left(t(A-I)+I\right)^{-1} dt.\end{equation}

For $\epsilon >0$
define $U_\epsilon=\{X\in \mathbb M_n(\mathbb C): \|X\|_2<\epsilon\}$ and $V_\epsilon= \exp(U_\epsilon)$.
Let $G\subseteq GL(n,\mathbb C)$ be a matrix Lie group with Lie algebra $\mathcal{G}$ and let $A_0\in G$. Then by \cite[Theorem 2.27]{hall}, there exists an $\epsilon>0$
such that the map $H: U_\epsilon \cap \mathcal{G}\to A_0V_\epsilon\cap G$ defined by  $H(X)= A_0 \exp(X)$ is a bijective map. For $X\in \mathcal{G}$
\begin{eqnarray*}\De H(O)(X)&=&A_0  \De\exp(O)(X)\\
&=& A_0\int_0^1 e^{(1-t)O}Xe^{tO}dt\\
&=&A_0 X.
\end{eqnarray*}
This gives
\begin{equation}\label{DpsiX}
|||\De H(O)|||\leq \|A_0\| .
\end{equation} Let $H_1: A_0V_\epsilon\to M(n, \mathbb C)$ be the map defined as $H_1(W)=\log (A_0^{-1}W).$ Note that the restriction of $H_1$ to $A_0V_\epsilon\cap G$ is $H^{-1}$, and so $(H, U_\epsilon\cap \mathcal{G}, A_0V_\epsilon \cap G)$ is a local chart around $A_0\in G$.  For $A\in A_0V_\epsilon$
\begin{eqnarray*}
\De H_1(A)(X)&=& \De \log{(A_0^{-1}A)}(A_0^{-1}X)\\
&=& \int_0^1\left(t(A_0^{-1}A-I)+I\right)^{-1}A_0^{-1}X\left(t(A_0^{-1}A-I)+I\right)^{-1} dt\\
&=& \int_0^1\left(t(A-A_0)+A_0\right)^{-1}X\left(t(A-A_0)+A_0\right)^{-1}A_0\ dt.
\end{eqnarray*}

By Taylor's theorem, we have
\begin{eqnarray*}
|||H_1(\tilde{A})-H_1(A)|||&\lesssim& |||\De H_1(A)|||\ |||\tilde{A}-A||| \\
&\leq&\|A_0\|\left(\int_0^1\|\left(t(A-A_0)+A_0\right)^{-1}\|^2dt\right)  |||\tilde{A}-A|||.
\end{eqnarray*}

In particular, when $A= A_0$,
we have
\begin{eqnarray}\label{derivatieofpsi_inverse}
|||H_1(\tilde{A_0})-H_1(A_0)|||\lesssim \|A_0\|\ \|A_0^{-1}\|^2 \ |||\tilde{A_0}-A_0|||.
\end{eqnarray}

Let $G_1\subseteq GL(n, \mathbb C) $ be a differential manifold ($G_1$ may not be a group) and let $F: G\to G_1$ be a smooth map.  Then
$F\circ H: U_\epsilon\cap \mathcal{G}\to G_1.$
Note that $ H_1(A_0)=O$. Let $\tilde{S}\in U_\epsilon\cap \mathcal{G}$  be in a small neighbourhood of $O$. Then we have
\begin{equation}
|||(F\circ H)(\tilde{S})-(F\circ H)(O)|||\lesssim|||\De(F\circ H)(O) |||\ |||\tilde{S}|||.
\end{equation}
Therefore
\begin{equation}
|||(F\circ H)(\tilde{S})-(F\circ H)(O)|||\lesssim|||\De F(A_0)|||\ |||\De H(O)|||\ |||\tilde{S}|||\label{Dphi}.
\end{equation}

Let $\tilde{A_0}=H(\tilde S)$.
Using equations \eqref{DpsiX} and \eqref{Dphi}, we get
\begin{eqnarray*}
|||F(\tilde{A_0})-F(A_0)|||&\lesssim& |||\De F(A_0)||| \ \|A_0\|\ |||\tilde{S}|||\\
&=& |||\De F(A_0)|||\ \|A_0\|\ |||H_1(\tilde{A_0})-H_1(A_0)|||.
\end{eqnarray*}
By \eqref{derivatieofpsi_inverse}, we obtain
\begin{eqnarray}
|||F(\tilde{A_0})-F(A_0)|||\lesssim |||\De F(A_0)|||\ \text{\rm cond} (A_0) ^2 \ |||\tilde{A_0}-A_0|||.
\end{eqnarray}

In particular, when $A_0$ is unitary matrix, we get
\begin{equation}
|||F(\tilde{A_0})-F(A_0)|||\lesssim |||\De F(A_0)|||\ |||\tilde{A_0}-A_0|||. \label{6.6}
\end{equation}

\subsection{Perturbation bounds for the bipolar decomposition}
Equation \eqref{6.6} and Theorem \ref{thm:perturbation_bound_for unitary} together give the perturbation bounds for the decomposition  \eqref{unitary}. We state this as a proposition below. The notations are as in Section \ref{section 4}.
\begin{prop}
For $W\in \mathbb U$ and $k=1,2$
\begin{equation}
|||\tilde{W_k}-W_k|||\lesssim 2\left(\sum_{n=-\infty}^\infty |a_n|\right) |||\tilde{W}-W|||.\label{bound expression_unitary_decompstn}
\end{equation}
\end{prop}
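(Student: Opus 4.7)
The plan is to obtain this as a direct corollary of the two main ingredients that have been developed earlier in the paper: the pointwise derivative bound of Theorem \ref{thm:perturbation_bound_for unitary}, and the first-order perturbation estimate \eqref{6.6} for a smooth map out of a Lie group, specialized to the case in which the base point is unitary. Nothing new has to be proved; what needs to be checked is that the hypotheses of these two results line up with the situation at hand.

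First I would identify the structural picture: $\mathbb U$ is an open subset of the unitary group $\mathbb U(n,\C)$, which is a Lie group, and $\Phi_k : \mathbb U \to \mathbb O(n,\R)$ or $\mathbb U_{\mathrm{sym}^+}$ is smooth on $\mathbb U$ (the hypothesis $-1 \notin \sigma(W' W)$ is precisely what keeps the square root that defines $W_2$ smooth, and $W_1 = W W_2^{-1}$ is then smooth as well). Thus the Lie group chart $H(X)=W\exp(X)$ from Section 5.1 applies around the base point $A_0 = W$. Since $W$ is unitary, $\|W\| = \|W^{-1}\| = 1$, so $\text{\rm cond}(W)=1$; this is the reason \eqref{6.6} is the right inequality to invoke, as the $\text{\rm cond}(A_0)^2$ factor present in the general estimate collapses.

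Next I would apply \eqref{6.6} with $F = \Phi_k$ and $A_0 = W$, obtaining
\begin{equation*}
|||\Phi_k(\tilde W)-\Phi_k(W)|||\lesssim |||\De \Phi_k(W)|||\ |||\tilde W - W|||.
\end{equation*}
Identifying $\Phi_k(\tilde W) = \tilde{W_k}$ and $\Phi_k(W)=W_k$ by definition of the decomposition map, and then inserting the bound $|||\De \Phi_k(W)|||\leq 2\bigl(\sum_{n=-\infty}^{\infty}|a_n|\bigr)$ from Theorem \ref{thm:perturbation_bound_for unitary}, produces exactly \eqref{bound expression_unitary_decompstn}.

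There is no genuine obstacle here; the only mildly delicate point is bookkeeping, namely making sure that $\tilde W$ lies in the neighbourhood $A_0 V_\epsilon\cap G$ on which the chart $H$ is a diffeomorphism, so that $\tilde S := H_1(\tilde W)$ is defined and small. This is automatic for $\tilde W$ in a sufficiently small neighbourhood of $W$ in $\mathbb U$, which is precisely the regime in which the relation $\lesssim$ of \eqref{taylorthmforlinearspace} is used throughout Section 5.
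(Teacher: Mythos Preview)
Your proposal is correct and is essentially identical to the paper's own argument: the paper simply states that the proposition follows by combining equation \eqref{6.6} (the first-order perturbation estimate at a unitary base point) with the derivative bound of Theorem \ref{thm:perturbation_bound_for unitary}, which is exactly what you spell out. Your additional remarks about smoothness of $\Phi_k$ on $\mathbb U$ and about the neighbourhood bookkeeping are sound and merely make explicit what the paper leaves implicit.
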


As observed in \cite{bhatia_bipolar}, the expression \eqref{unitary} gives both the second and third polar decompositions for a unitary matrix $W$.  Therefore Theorem 4.6 and Theorem 4.7 in \cite{bhatia_Matrix_factorizations} give that for each $k$\begin{equation}|||\tilde{W}_k-W_k|||\lesssim\left(\displaystyle{\int_0^\infty}\|{e^{-tW_2}}\|^2 dt\right) |||\tilde{W}-W|||.\label{bound_from Second and third} \end{equation}

We see that the bounds obtained in \eqref{bound expression_unitary_decompstn} are sometimes better than the ones given  by \eqref{bound_from Second and third}. For example,
let   $W= \diag(e^{i\theta}, e^{i\theta})$, where $\pi/3<\theta<\pi/2$.  Then $W_1=I$, $W_2=W$ and$\displaystyle{\int_0^\infty}\|{e^{-tW_2}}\|^2 dt= \frac{1}{2\cos\theta}>1$.
Let $a_0=1/2$ and $a_n=0$ for all $n\neq 0$. Then $\{a_n\}\in \ell_1$ satisfies \eqref{an} and $2\sum_{n=-\infty}^{\infty} |a_n|=1$.

If the eigenvalues of $W_2$ are close to $i$ or $ -i$, then the bounds in \eqref{bound_from Second and third} are too large. But the bounds we get in \eqref{bound expression_unitary_decompstn} depend upon how far the eigenvalues of $W_2$ lie on the unit circle. We explain this below.

Let $\Theta= \{\theta_i-\theta_j: e^{i\theta_j}\in \sigma(W_2)\}\subseteq (-\delta, \delta),$ where $0<\delta<\pi$.  We define the function $f: [-\pi, \pi]\to \mathbb C$ as
\[
  f(\theta) =
  \begin{cases}
    \frac{1}{2}+ \frac{i\tan\frac{\delta}{2}}{2(\pi-\delta)}(\theta+\pi) & \text{$-\pi\leq \theta \leq -\delta$,} \\
    \frac{1}{2}-i\tan\frac{\theta}{2}& \text{ $-\delta\leq \theta\leq \delta$,}\\
    \frac{1}{2}+\frac{i}{2}\tan\frac{\delta}{2}(-1+\frac{\theta-\delta}{(\pi-\delta)})&\text{ $\ \delta\leq \theta\leq \pi$.}
  \end{cases}
\]
Then $f$ is periodic and absolutely continuous. Also, $$\int_{-\pi}^{\pi}|f'(\theta)|^2d\theta = \frac{\tan^2\frac{\delta}{2}}{2(\pi- \delta)}+\frac{\tan^2\frac{\delta}{2}}{12}+\frac{\tan\frac{\delta}{2}}{4}.$$ So $f'\in L^2[-\pi, \pi]$.
Let the Fourier coefficients of $f$ be $b_n$.  For the sequence $a_n=(-1)^n b_n$ we have $a_n\in \ell^1$ and for $\theta\in (-\delta, \delta)$
 \begin{equation}
\sum_{n=-\infty}^\infty (-1)^n a_n e^{in\theta}= \frac{1}{1+e^{i\theta}}= \frac{1}{2}-\frac{i}{2}\tan\frac{\theta}{2}.
\end{equation}
From \cite[p. 117]{bhatia_fs}, we know that $$\sum_{n=-\infty}^{\infty}|b_n|= \sum_{n=-\infty}^{\infty}|a_n| \leq a_0+ \frac{\pi}{\sqrt{3}}\|f'\|_{L^2}.$$
Therefore \begin{equation}\label{boundan}2\sum_{n=-\infty}^{\infty} |a_n|\leq 1+ \frac{\pi}{\sqrt{3}}\sqrt{\frac{2\tan^2\frac{\delta}{2}}{(\pi- \delta)}+\frac{\tan^2\frac{\delta}{2}}{3}+\tan\frac{\delta}{2}}.\end{equation}
If $\delta$ is very small, that is, if the eigenvalues of $W_2$ are very close to each other, then by \eqref{boundan}, we see that  $2\sum a_n$ is very close to 1.

We now obtain the perturbation bounds for the factors $S,K, L, T$ in the bipolar decomposition \eqref{bipolar}.
 For $Z\in GL(n,\mathbb C)$ let $$k(Z)=\left(\int_0^\infty \|e^{-t\left( (Z^* Z)^{-1} \overline{Z^* Z}\right)^{1/2}}\|^2 dt\right)\
\text{\rm cond} (Z)  \left(1+\text{\rm cond}(Z)^4\right),$$ and for $W$ unitary let $$C(W)=\int_0^1\|(t(W-I)+I)^{-1}\|^2dt.$$

Before stating the theorem, we observe a few things about the decomposition \eqref{bipolar}. For any $Z\in GL(n,\mathbb C)$ the matrices $S$ and  $K$ are unique but $L$ and $T$ are not unique.
If $e^L$ and $e^{iT}$ do not have $-1$ as an eigenvalue, then we can use principal logarithm to define $L$ and $T$ uniquely.  But if $e^L$ or $e^{iT}$ have $-1$ as an eigenvalue, then we choose  $\alpha\in [-\pi, 0)$ such that $e^{i\alpha}\notin \sigma(W_1)\cup \sigma(W_2)$.  A branch of logarithm for which $\arg z\in [\alpha, \alpha+2\pi)$ gives unique $S$, $K$, $L$ and $T$.

\begin{thm}\label{main}
Let $Z\in GL(n,\mathbb C)$ be such that $-1\notin \sigma(Z'Z ((Z^*Z)^{-1}\# (\overline{Z^*Z})^{-1}))$. Let $Z=e^L e^{iT} e^{iK} e^S$, where $\sigma(e^{iT})=\{e^{i\theta_1},\ldots,e^{i\theta_n}\}$. Let $\{a_n\}$ be any $\ell_1$-sequence such that for all $\theta= \theta_i-\theta_j \ (1\leq i,j\leq n)$ \begin{equation*}\sum_{n=-\infty}^\infty (-1)^n a_n e^{in\theta}= \frac{1}{1+e^{i\theta}}.\end{equation*}Then
\begin{equation}
|||\tilde L-L|||\lesssim 2\ C({e^L})\left(\sum_{n=-\infty}^{\infty} |a_n|\right)\frac{\|e^{-iK}\| \ \|e^{-S}\|}{2 } \left( 1+ \|e^{i K}\|\ k (Z)\right) |||\tilde Z-Z|||,\label{bound for L}
\end{equation}
\begin{equation}
|||\tilde{T}-T|||\lesssim 2\ C({e^{iT}})\left(\sum_{n=-\infty}^{\infty} |a_n|\right)\frac{\|e^{-iK}\| \ \|e^{-S}\|}{2 } \left( 1+ \|e^{i K}\|\ k (Z)\right) |||\tilde Z-Z|||,\label{bound for T}
\end{equation}
\begin{equation}
|||\tilde{K}-K|||\lesssim|||e^{-iK}|||\ \frac{\text{\rm cond} (e^{iK}) \ \|e^{-S}\| }{2 } \left(1+\|e^{iK}\|\ k(Z) \right) |||\tilde Z-Z|||,\label{bound for K}
\end{equation}
and
\begin{equation}
|||\tilde{S}-S|||\lesssim|||e^{-S}|||\ k(Z)\ |||\tilde Z-Z||| \label{bound for S}.
\end{equation}
\end{thm}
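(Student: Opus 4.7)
The plan is to express each of the four factors $L, T, K, S$ as a composition of the maps already analyzed in Sections~3 and~4, and then to chain together the first-order Taylor bounds of Section~5.1. Since $\varrho_2(Z)=e^S$, $\varrho_1(Z)=e^{iK}$, and $\varrho_0(Z)=W=W_1W_2=e^Le^{iT}$ with $W_k=\Phi_k(W)$, the four decomposition maps factor as
$$S=\log\circ\varrho_2,\qquad K=-i\log\circ\varrho_1,\qquad L=\log\circ\Phi_1\circ\varrho_0,\qquad T=-i\log\circ\Phi_2\circ\varrho_0,$$
with $\log$ on the branch that makes $L,T,K,S$ unique. The chain rule then expresses each of $\De S(Z),\De K(Z),\De L(Z),\De T(Z)$ as a product of derivatives of the constituent maps, whose norms are supplied by Theorem~\ref{3.1} (for $\De\varrho_i$) and Theorem~\ref{thm:perturbation_bound_for unitary} (for $\De\Phi_k$), together with a yet-to-be-bounded $\De\log$ factor.

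The only missing ingredient is therefore $|||\De\log(A)|||$ with $A$ equal to $e^S,e^{iK},e^L,e^{iT}$. The integral representation \eqref{derivativeoflog} gives
$$|||\De\log(A)|||\le\int_0^1\|(t(A-I)+I)^{-1}\|^2\,dt.$$
For the unitary matrices $A=e^L$ and $A=e^{iT}$ this integral is by definition $C(e^L)$ and $C(e^{iT})$, which is how those constants appear in \eqref{bound for L}--\eqref{bound for T}. For positive definite $A$ with smallest eigenvalue $\mu_n$, the operator norm of $(tA+(1-t)I)^{-1}$ equals $1/(t\mu_n+(1-t))$, and an explicit antiderivative yields
$$\int_0^1\frac{dt}{(t\mu_n+(1-t))^2}=\frac{1}{\mu_n}=\|A^{-1}\|,$$
producing the $\|e^{-S}\|$ and $\|e^{-iK}\|$ factors in \eqref{bound for S}--\eqref{bound for K}.

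The final assembly is a chain of Taylor-type inequalities: \eqref{taylorthmforlinearspace} applied on the (locally) convex open set $GL(n,\mathbb C)$ handles the first link $\tilde Z\to\tilde W,\tilde P_1,\tilde P_2$, and the unitary-base version \eqref{6.6} handles the subsequent links $\tilde W\to\tilde W_k$ and $\tilde W_k\to\tilde L,\tilde T$ without introducing any condition-number penalty, since each intermediate base point is unitary. Multiplying the resulting bounds in the correct order reproduces \eqref{bound for L}--\eqref{bound for S}. The main subtlety, and the thing one must verify carefully at every stage, is that the perturbation machinery of Section~5.1 is actually applicable: the intermediate base points $W$ and $W_k$ really are unitary (so that \eqref{6.6} applies rather than its condition-number-laden general version), and the chosen branch of $\log$ remains continuous under small perturbation of each base point. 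The latter is precisely what the hypothesis $-1\notin\sigma(Z'Z((Z^*Z)^{-1}\#(\overline{Z^*Z})^{-1}))$, together with the hypothesis $-1\notin\sigma(W'W)$ of Section~\ref{section 4}, is designed to guarantee.
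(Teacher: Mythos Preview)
Your proposal is correct and follows essentially the same route as the paper: factor each of $S,K,L,T$ through $\varrho_i$, $\Phi_k$, and $\log$, then chain the first-order Taylor bounds using Theorem~\ref{3.1}, Theorem~\ref{thm:perturbation_bound_for unitary} (via Proposition~5.1), and a bound on $\De\log$. The only technical difference is in how you handle $|||\De\log(A)|||$ for positive definite $A$: the paper invokes the operator monotonicity of $\log$ (\cite[Theorem X.3.4]{bhatia_matrix_analysis}) to obtain $|||\De\log(A)|||\le\|A^{-1}\|$, whereas you compute the integral \eqref{derivativeoflog} directly, arriving at the same value $1/\mu_n=\|A^{-1}\|$. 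Both arguments yield the stated bounds.
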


\begin{proof}
Let $Z= W e^{iK}e^S$. Using notations as in section 3, we get
\begin{equation}
|||\tilde{W}-W|||\lesssim |||\De\varrho_0(Z)|||\ |||\tilde{Z}-Z|||,
\end{equation}
\begin{equation}
|||e^{i\tilde{K}}- e^{iK}|||\lesssim |||\De\varrho_1(Z)|||\ |||\tilde{Z}-Z|||,
\end{equation}
and \begin{equation}
|||e^{\tilde{S}}- e^S|||\lesssim |||\De \varrho_2(Z)|||\ |||\tilde{Z}-Z|||.
\end{equation}

The matrices $e^S$ and $e^{iK}$ are both positive.  So  $\log(e^S)=S$ and $\log(e^{iK})= iK$.
We have
\begin{eqnarray}
|||\tilde S-S|||&\lesssim& |||\De \log (e^S)|||\ |||e^{\tilde S}-e^S|||\nonumber\\
&\lesssim & |||\De \log (e^S)|||\ |||\De \varrho_2(Z)|||\ |||\tilde Z-Z|||.
\end{eqnarray}
Similarly,
\begin{equation}
|||\tilde K-K|||\lesssim |||\De \log(e^{iK})|||\ |||\De \varrho_1(Z)||| \ |||\tilde Z-Z|||.
\end{equation}
We know that if $\eta$ is an operator monotone function on $(0,\infty)$, then for $A\in \mathbb P(n,\mathbb C)$
$|||\De \eta(A)|||\leq \|\eta'(A)\|$ \cite[Theorem X.3.4]{bhatia_matrix_analysis}.  Now since $\log$ is an operator monotone function on $(0,\infty)$, we obtain \begin{equation}
|||\De \log(e^S)|||\leq |||e^{-S}|||\label{5.22}
\end{equation} and
\begin{equation}
|||\De \log(e^{iK})|||\leq |||e^{-iK}|||.\label{5.23}
\end{equation}
Equations \eqref{5.22} and \eqref{bound for P_2} give \eqref{bound for S}. And, \eqref{5.23} and \eqref{bound for P_1} give \eqref{bound for K}.

 Since $-1\notin \sigma(Z'Z ((Z^*Z)^{-1}\# (\overline{Z^*Z})^{-1}))$, $-1\notin \sigma(W'W)$. Then by the second or third polar decomposition, $W$ can be uniquely factorized as $W=W_1W_2$, where $W_1$ and $W_2$ are also unitary matrices. If $L= \log W_1$ and $iT= \log W_2$, then we have $W= e^L e^{iT}$. Now
\begin{eqnarray*}
|||\tilde{L}-L|||&=& |||\log (e^{\tilde{L}})-\log (e^L)|||\\
&\lesssim& |||\De\log(e^L)|||\ |||e^{\tilde{L}}-e^L|||.
\end{eqnarray*}
By \eqref{derivativeoflog}, we obtain
$$
|||\tilde{L}-L|||\lesssim  C({e^L}) \ |||e^{\tilde{L}}-e^L|||.$$
Equation \eqref{bound expression_unitary_decompstn} gives
$$|||\tilde{L}-L|||\lesssim 2 C({e^L})\left(\sum_{n=-\infty}^{\infty} |a_n|\right)|||\tilde{W}-W|||.$$
Therefore we have
$$|||\tilde{L}-L|||\lesssim 2 C({e^L})\left(\sum_{n=-\infty}^{\infty} |a_n|\right)\ |||\De \varrho_0(Z)|||\ |||\tilde{Z}-Z|||.$$
By \eqref{bound for W}, we obtain \eqref{bound for L}.
Similar calculations by putting   $\tilde T=\frac{1}{i} \log \tilde W_2$ yield \eqref{bound for T}.
\end{proof}
We illustrate the behaviour of the above bounds with the help of an example. For a natural number $n$, consider a one-parameter family of matrices $Z_n(t)= \diag(e^{\sin t}, e^{\sin (t+\frac{\pi}{n})})$, $t\in \mathbb R$.  For $Z_n(t)$, the factors in the bipolar decomposition are given by $S_n(t)= \diag(\sin t, \sin(t+\frac{\pi}{n}))$, $K_n(t)=T_n(t)= L_n(t)= O$. We consider the operator norm in Theorem \ref{main}. Let $f_n(t)$ be the first order perturbation bounds as given in \eqref{bound for S}, that is, $f_n(t)= \|e^{-S_n(t)} \| \ k(Z_n(t))$. Then  \begin{eqnarray}f_n(t)&=& \frac{1}{2}\left(\max(e^{-\sin t}, e^{-\sin(t+\frac{\pi}{n})})\right)^2 \max(e^{\sin t}, e^{\sin(t+\frac{\pi}{n})})\nonumber\\ &&\left(1+\max(e^{-4\sin t}, e^{-4\sin(t+\frac{\pi}{n})}) \max(e^{4\sin t}, e^{4\sin(t+\frac{\pi}{n})})\right).\nonumber
    \end{eqnarray}
The behavior of $f_n(t)$ can be seen in the following graph.  We observe that for $n=2$, the perturbation bound for some of these matrices can be more than 1200.
  \begin{center}\includegraphics[width=\textwidth, height=6cm]{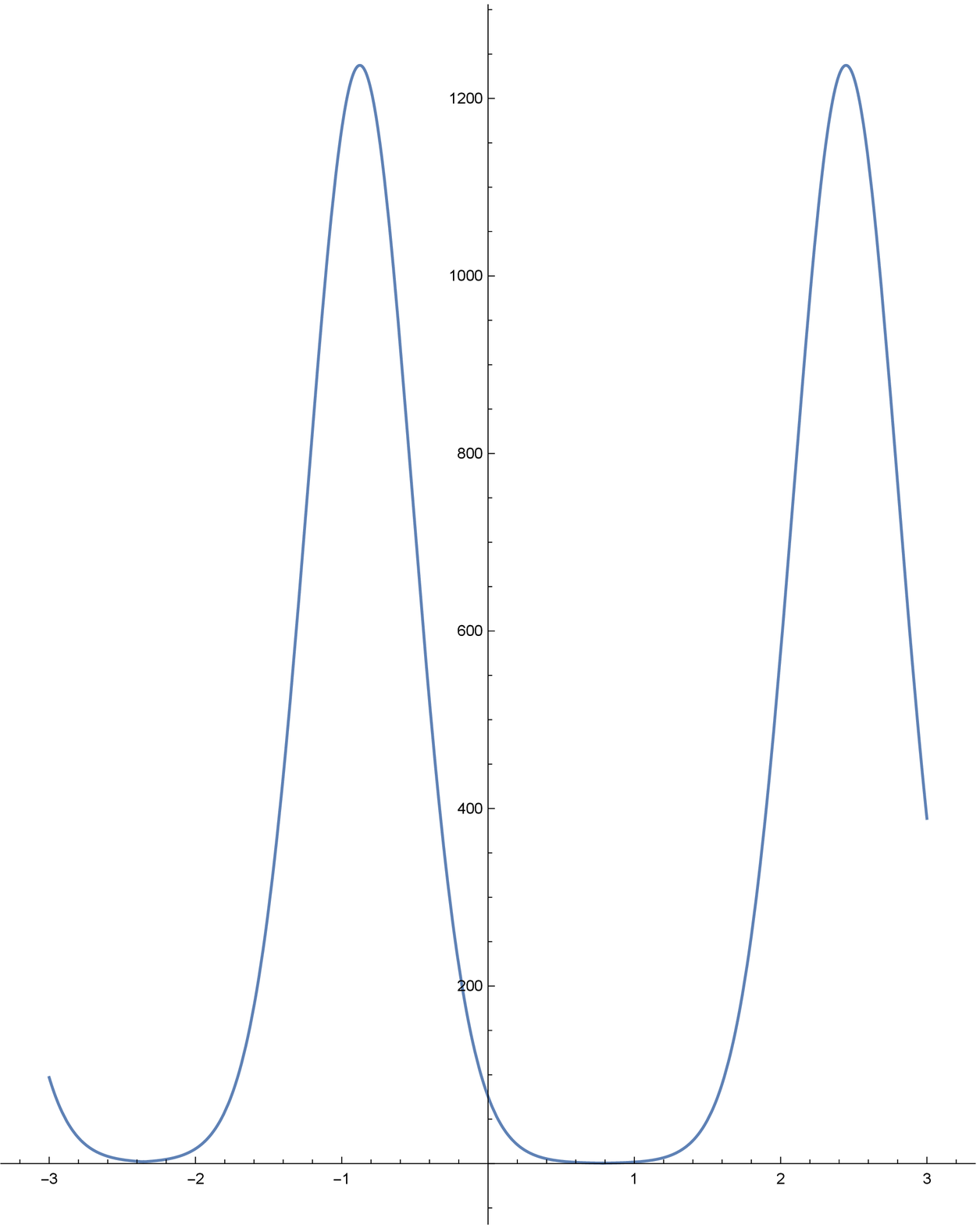}\end{center}
When $n$ increases, the maximum value of $f_n(t)$ decreases. In particular, we observe this for $n=500$ in the below graph.
\begin{center}\includegraphics[width=\linewidth]{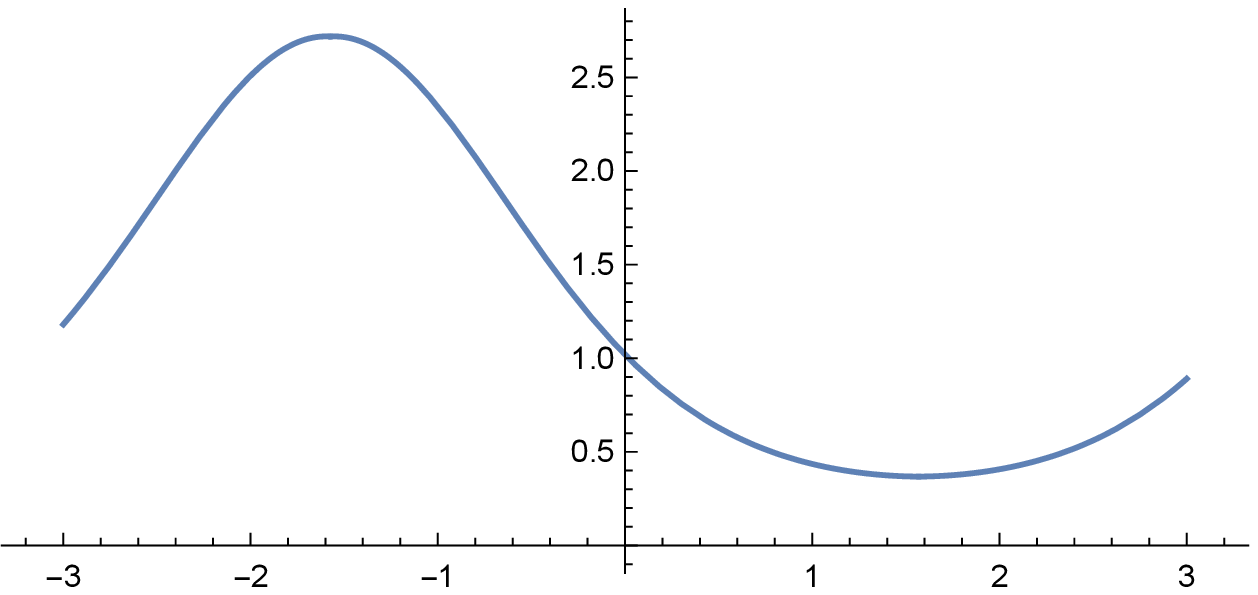}\end{center}
Bounds for other factors $K_n(t), L_n(t)$ and $T_n(t)$ are given by $g_n(t):= \frac{1}{2}\|e^{-S_n(t)}\| (1+k(Z_n(t)))$ which also vary in a similar way.

\begin{remark} Other perturbation bounds for $L$ and $T$ in Theorem \ref{main} can also be found using direct formulas, which we get from the principal logarithm. Let $\mathbb V$ be the set of  complex unitary matrices $W$ such that $W'W$ and $W e^{-\frac{1}{2}\log(W'W)}$ do not have eigenvalue $-1$. Then $L=\log(W e^{-\frac{1}{2}\log(W'W)})$ and $ T=  \frac{1}{2i}\log(W'W).$
Using the chain rule and Taylor's theorem, we get
$$|||\tilde{L}-L|||\leq \left(1+C(W'W)\right)|||\tilde{W}-W|||$$
and
$$|||\tilde{T}-T|||\leq C(W'W)|||\tilde{W}-W|||.$$
\end{remark}

\textbf{Acknowledgement.} The work of the first author is supported by  the research grant of INSPIRE Faculty Award [DST/INSPIRE/04/2014/002705] of Department of Science and Technology, India.


\begin{thebibliography}{99}
\bibitem{barbaresco} F. Barbaresco, Information geometry of covariance matrix: Cartan-Siegal homogeneous bounded domains, Mostow/Berger fibration and Fr\'echet median, {\it Matrix Information Geometry}, eds. F. Nielsen, R. Bhatia, Springer, 2013, pp. 199-255.
\bibitem{bhatia_Matrix_factorizations} R. Bhatia, Matrix factorizations and their perturbations, {\it Linear Algebra Appl.} 197/198 (1994) 245–276.
\bibitem{bhatia_matrix_analysis} R. Bhatia, {\it Matrix Analysis}, Springer, 1997.
\bibitem{bhatia_fs} R. Bhatia, {\it Fourier Series}, Hindustan Book Agency, 2003.
\bibitem{bhatia_positive_definite} R. Bhatia, {\it Positive Definite Matrices}, Princeton University Press, 2007.
\bibitem{bhatia_bipolar} R. Bhatia, The bipolar decomposition, {\it Linear Algebra Appl.} 439 (2013) 3031--3037.
\bibitem{bhatia_elsner} R. Bhatia, L. Elsner, Higher order logarithmic derivatives of matrices in the spectral norm, {\it SIAM J. Matrix Anal. Appl.} 25 (2003) 662--668.
\bibitem {bhatia_grover} R. Bhatia, P. Grover, Norm inequalities related to the matrix geometric mean, {\it Linear Algebra Appl.} 437 (2012) 726--733.
\bibitem{bhatia_rosenthal} R. Bhatia, P. Rosenthal, How and why to solve the operator equation $AX-XB=Y$, {\it Bull. London Math. Soc.} 29 (1997) 1--21.
\bibitem{bhatiauchiyama} R. Bhatia, M. Uchiyama, The operator equation $\sum_{i=0}^n A^{n-i} X B^i=Y$, {\it Expo. Math.} 27 (2009) 251--255.
\bibitem{chang} X.-W. Chang, On the sensitivity of the SR decomposition, {\it Linear Algebra Appl.} 282 (1998) 297--310.
\bibitem{chang_stehle} X.-W. Chang, D. Stehle, Rigorous perturbation bounds of some matrix factorizations, {\it SIAM J. Matrix Anal. Appl.} 31 (2010) 2841--2859.
\bibitem{galantai} A. Gal\'antai, Perturbations of triangular matrix factorizations, {\it Linear Multilinear Algebra} 51 (2003) 175--198.
\bibitem{hall} B.C. Hall, {\it Lie Groups, Lie Algebras, and Representations : An Elementary Introduction},  Springer, 2003.
\bibitem{horn_tma} R. Horn, C.R. Johnson, {\it Topics in Matrix Analysis}, Cambridge University Press, 1990.
\bibitem{largillier} A. Largillier, Bounds for relative errors of complex matrix factorizations, {\it Appl. Math. Lett.} 9 (1996) 79--84.
\bibitem{li_sun} W. Li, W. Sun, Perturbation bounds of unitary and subunitary polar factors, {\it SIAM J. Matrix Anal. Appl.} 23 (2002) 1183--1193.
\bibitem{li_sun2} W. Li, W. Sun, New perturbation bounds for unitary polar factors, {\it SIAM J. Matrix Anal. Appl.} 25 (2003) 362--372.
\bibitem{li_yang_shao} H. Li, H. Yang, H. Shao, Perturbation analysis for the hyperbolic QR factorization, {\it Comput. Math. Appl.} 63 (2012) 1607--1620.
\bibitem{mostow} G.D. Mostow, Some new decomposition theorems for semi-simple groups, {\it Mem. Amer. Math. Soc.} 14 (1955) 31--54.
\bibitem{stewart} G.W. Stewart, Perturbation bounds for the QR factorization of a matrix, {\it SIAM J. Numer. Anal.} 14 (1977) 509--518.
\bibitem{sun_stewart} G.W. Stewart, J.-G. Sun, {\it Matrix Perturbation Theory}, Academic Press, 1990.
\bibitem{xie} Z. Xie, W. Li, Sensitivity analysis for the SR decomposition, {\it Linear Multilinear Algebra} 63 (2015) 222--234.
\end{thebibliography}
\end{document}